\newcommand{\X}{\mathcal{X}}
\newcommand{\Y}{\mathcal{Y}}
\newcommand{\Z}{\mathcal{Z}}
\newcommand{\G}{\mathcal{G}}
\newcommand{\bs}{\boldsymbol}
\newcommand{\A}{{\bs{A}}}
\renewcommand{\a}{{\bs{a}}}
\newcommand{\B}{{\bs{B}}}
\renewcommand{\b}{{\bs{b}}}
\renewcommand{\c}{{\bs{c}}}
\newcommand{\C}{{\bs{C}}}
\renewcommand{\d}{{\bs{d}}}
\newcommand{\D}{{\bs{D}}}
\newcommand{\V}{{\bs{V}}}
\newcommand{\U}{{\bs{U}}}
\newcommand{\W}{{\bs{W}}}
\DeclareMathOperator{\ACDE}{ACDE}
\DeclareMathOperator{\Do}{do}
\DeclareMathOperator{\pa}{pa}
\newcommand\indep{\protect\mathpalette{\protect\independenT}{\perp}}
\def\independenT#1#2{\mathrel{\rlap{$#1#2$}\mkern2mu{#1#2}}}
 \newtheorem{lem}{Lemma}[section]
 \newtheorem{thm}[lem]{Theorem}
 \newtheorem{prop}[lem]{Proposition}
 \newtheorem{cor}[lem]{Corollary}
 \newtheorem{rmk}[lem]{Remark}
\title{Graphical Methods for Inequality Constraints in Marginalized DAGs}
\author{Robin J.~Evans}
\begin{document}

\maketitle

\begin{abstract}
We present a graphical approach to deriving inequality constraints for directed acyclic graph (DAG) models, where some variables are unobserved.  
In particular we show that the observed distribution of a discrete model is always restricted if any two observed variables are neither adjacent in the graph, nor share a latent parent; this generalizes the well known instrumental inequality.  
The method also provides inequalities on interventional distributions, which can be used to bound causal effects.  
All these constraints are characterized in terms of a new graphical separation criterion, providing an easy and intuitive method for their derivation.
\end{abstract}

% \begin{keywords}
% Causal model, controlled direct effect, directed acyclic graph, intervention, marginalized.
% \end{keywords}
%
\section{Introduction}
\label{sec:intro}

Models based on directed acyclic graphs (DAGs) are commonly used for causal inference on account of their simple to understand conditional independence constraints, and the intuitive appeal of using arrows to display causal dependences.  If all the variables in a DAG are observed then causal quantities of interest are typically point identified, and derivable in terms of conditional probabilities.  
However, it is common for some variables to be unobservable, possibly representing confounding factors which may bias inference;
in this case we can only observe the marginal distribution over the remaining variables.

%; such systems may be represented in terms of a DAG model in which some of the variables have been marginalized over.

The models which result from the marginalization of a DAG are much less well understood and, unlike DAGs, are not described merely in terms of conditional independence constraints.  In particular, causal effects may not be point identified, and we can only hope for inequality constraints describing the range of possible values.  %If such bounds exclude the possibility of no effect, then this can be used to falsify a simpler model.

Existing methods for deriving bounds on observed distributions are either specific to a particular model \cite{pearl:95, balke:97}, or computationally intensive and lacking the intuitiveness of a graphical approach \cite{bonet:01, kang:06}.
%Falsifiable bounds on the observed distribution of the instrumental variables (IV) model, shown in Figure \ref{fig:iv}(a), were first derived by \cite{pearl:95}, and further work was done by \cite{bonet:01} using linear programming (LP).  Causal bounds for the same model were derived by \cite{balke:97} and \cite{cai:08} using LP.  
%\cite{kang:06} give general methods for deriving bounds on interventional distributions, although the algorithms are fairly computationally intensive.  
%\cite{ramsahai:12} derives bounds in the decision theoretic causal framework.
See \cite{ramsahai:12} for an approach which is graphical in spirit, but uses computationally difficult variable elimination methods.
In this paper we take steps to remedy these problems by providing a simple graphical separation criterion for determining the existence of constraints, and for constructing them explicitly.

The remainder of the paper is organised as follows: \S\ref{sec:dags} introduces DAGs and related terminology and notation.  \S\ref{sec:iv} gives a new method for deriving known constraints on the observed distribution of the instrumental variables model, and related causal effects.  \S\ref{sec:other} applies these methods to give new constraints for general DAG models, and \S\ref{sec:exm} contains examples.  A discussion is found in \S\ref{sec:discuss}, and longer proofs are in an appendix.

\section{Graphical Models}
\label{sec:dags}

A \emph{directed graph} $\G$ is a set of vertices $\V$, with a collection of ordered pairs of distinct vertices, or \emph{edges}, $\mathcal{E}$.  If $(X,Y) \in \mathcal{E}$ we write $X \rightarrow Y$, and say that $X$ is a \emph{parent} of $Y$.  The set of parents of $Y$ is denoted $\pa_\G(Y)$.
A \emph{path} is a sequence of adjacent edges in a graph, without repetition of vertices; for example, the graph in Figure \ref{fig:iv}(a) contains the path $\pi_1 : Z \rightarrow X \leftarrow U \rightarrow Y$.  A path is \emph{directed} from $X$ to $Y$ if all the arrows point away from $X$ and towards $Y$.  If there is a directed path from $X$ to $Y$ we say that $Y$ is a \emph{descendant} of $X$, and $X$ an \emph{ancestor} of $Y$.
A directed graph is \emph{acyclic} if there is no directed path from a vertex to itself; such an object is called a directed acyclic graph (DAG).

%A \emph{directed acyclic graph} $\G$ is a collection of vertices $\V$ under a total ordering $\prec$, and a collection of unordered pairs of vertices, or \emph{edges}, $\mathcal{E}$.  If $\{X,Y\} \in E$ and $X \prec Y$ we write $X \rightarrow Y$, and say that $X$ is a \emph{parent} of $Y$.  The set of parents of $Y$ is denoted $\pa_\G(Y)$.

We associate each vertex $X$ with a random variable under some multivariate distribution $P$; let $P$ admit a density $f$.  
For convenience, in what follows we will use $X$ to denote both the vertex and the random variable, and similarly use operators and bold face letters (e.g.\ $\pa_\G(X)$, $\C$) to refer to both a set of vertices and the associated vector of random variables.
%; this slight abuse of notation is unambiguous in practice, and fits with the approach of other authors.
The \emph{factorization criterion} for DAGs says that $P$ is in the model corresponding to the DAG $\G$ if the joint density factorizes as $\prod_{V \in \V} f(V \,|\, \pa_{\G}(V))$.
%\begin{align}
%\prod_{V \in \V} f(V \,|\, \pa_{\G}(V)). \label{eqn:factor}
%\end{align}
%We denote the set of probability measures which factorize according to a DAG $\G$ by $\M(\G)$.

Internal vertices on a path with two adjacent arrowheads are called \emph{colliders} on the path; other internal vertices are non-colliders.  On the path $\pi_1$, $X$ is a collider, and $U$ a non-collider.
A path $\pi$ from $X$ to $Y$ is \emph{blocked} given a set of vertices $\C$ if there is a non-collider on $\pi$ in $\C$, or a collider on $\pi$ which is not an ancestor of any vertex in $\C$.

We say that two sets of vertices $\A$ and $\B$ are \emph{d-separated} given a set of vertices $\C$, if every path from any vertex in $\A$ to any vertex in $\B$ is blocked by $\C$.  A probability distribution $P$ obeys the \emph{global Markov property} for a DAG $\G$ if whenever $\A$ and $\B$ are d-separated by $\C$ in $\G$, then $\A \indep \B \,|\, \C \, [P]$.

It is well known that d-separation is equivalent to the factorization criterion \cite{verma:88}.  In particular, all constraints implied by a DAG on fully observed random variables can be interpreted as conditional independences.

Assigning a causal interpretation to a DAG model requires extra assumptions, in particular that the system under observation is stable under interventions with respect to the graph.  We will denote an intervention to fix $X = x$ by $\Do(X = x)$, or $\Do(x)$ for short; graphically this may be represented by removing the edges of the form $Y \rightarrow X$, so that $X$ has no parents in the new graph.  The density $f( \V \,|\, \Do(x))$ is given by dividing the joint density $f$ by $f(x \,|\, \pa_{\G}(X))$ and multiplying by the indicator function $\mathbbm{1}_{\{X = x\}}$.  See \cite{pearl:09} for details.

If some of the variables in a DAG are unobserved, we may be interested in the implications of the underlying graph for the observable margin.  Let $\U \subset \V$ denote the set of latent or unobservable vertices; the observable margin is then
\begin{align}
\int_{U \in \U}  \prod_{V \in \V} f(V \,|\, \pa_{\G}(V)) \, dU. \label{eqn:factor}
\end{align}
%We will use $V$ to denote a generic variable, $U$ for an unobserved variable, $X$, $Y$, $Z$, $W$ for observed variables.  
%
The marginal distribution over the observed variables is completely identifiable, but some of the structure of the underlying graph may be impossible to determine in the presence of latent variables.  We will make no assumption about the state space of the latent variables, since these are unobserved.  Some conditional independences may still be observable, but other kinds of constraint also arise, including \emph{Verma constraints} \cite{verma:91}, and inequalities on the observed distribution (see next section).  

%The observed vertices of a graph may be partitioned into \emph{districts}; $X$ and $Y$ lie in the same district if there is a path between $X$ and $Y$ on which no two adjacent vertices are both observable.  The graph in Figure \ref{fig:iv} has two districts, $\{X,Y\}$ and $\{Z\}$.  \cite{tian:02} use the equivalent term \emph{c-components}.

%Latent variables and their incident edges will be drawn in red (see Figure \ref{fig:iv}); this makes it easy to identify districts.  
Without loss of generality we will assume that none of the latent variables have any parents.

\section{Instrumental Variables} \label{sec:iv}

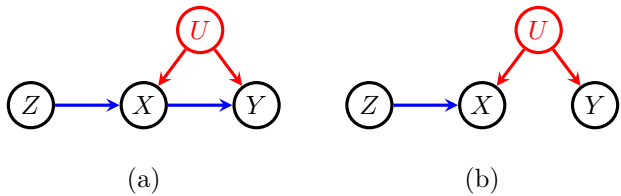
\begin{figure}
\begin{center}
\begin{tikzpicture}
[rv/.style={circle, draw, very thick, minimum size=6mm, inner sep=0.5mm}, node distance=15mm, >=stealth]
 \pgfsetarrows{latex-latex};
\begin{scope}
 \node[rv]  (1)              {$Z$};
 \node[rv, right of=1] (2) {$X$};
 \node[rv, above of=2, color=red, xshift=7.5mm, yshift=-5mm] (U) {$U$};
 \node[rv, right of=2] (3) {$Y$};
 \draw[->, very thick, color=blue] (1) -- (2);
 \draw[->, very thick, color=blue] (2) -- (3);
 \draw[->, very thick, color=red] (U) -- (2);
 \draw[->, very thick, color=red] (U) -- (3);
\node[below of=2, yshift=5mm] {(a)};
\end{scope}
\begin{scope}[xshift=45mm]
 \node[rv]  (1)              {$Z$};
 \node[rv, right of=1] (2) {$X$};
 \node[rv, above of=2, color=red, xshift=7.5mm, yshift=-5mm] (U) {$U$};
 \node[rv, right of=2] (3) {$Y$};
 \draw[->, very thick, color=blue] (1) -- (2);
% \draw[->, very thick, color=blue] (2) -- (3);
% \draw[->, very thick, dashed, color=blue] (1.320) .. controls +(1,-0.7) and +(-1,-0.7) .. (3.220);
 \draw[->, very thick, color=red] (U) -- (2);
 \draw[->, very thick, color=red] (U) -- (3);
\node[below of=2, yshift=5mm] {(b)};
\end{scope}
\end{tikzpicture}
\end{center}
\caption{(a) The instrumental variables (IV) model; $U$ is unobserved. (b) The IV model with the effect of $X$ on $Y$ removed.}
\label{fig:iv}
\end{figure}

Perhaps the most thoroughly studied causal DAG model is the \emph{instrumental variables} model, represented in Figure \ref{fig:iv}(a).  It arises naturally in randomized trials with imperfect compliance, in which $Z$ represents a randomized treatment assignment, $X$ the treatment actually taken by the subject, and $Y$ an outcome; $U$ represents unmeasured confounding factors which may affect both the probability of the subject taking the treatment and the outcome of interest, so that na\"ive estimators of the effect of $X$ on $Y$ will be biased.

The graph encodes (amongst other assumptions) that the assignment $Z$ does not affect the outcome $Y$ other than through the treatment $X$.  This is known as the exclusion restriction, and is important for assessing the effect of $X$ on $Y$; implications of the exclusion restriction which can be subjected to an empirical test are therefore very useful.

%The graph encodes two important assumptions: that the assignment $Z$ does not affect the outcome $Y$ other than through the treatment $X$, and that the assignment is independent of $U$.  Randomization of $Z$ makes the second assumption particularly plausible.  The first assumption, known as the exclusion restriction, is generally more difficult to assess even if it is scientifically plausible, so it is useful to find implications which can be subjected to an empirical test.

Making no assumptions about the character of $U$, and if $X$ is continuous, the observable margin is unconstrained \cite{bonet:01}.  However, if the observed variables have finite and discrete state spaces, then the observed distribution obeys the \emph{instrumental inequality} of \cite{pearl:95}:
\begin{align}
\max_{x} \sum_y \max_z p(x, y \,|\, z) \leq 1; \label{eqn:ins}
\end{align}
here $p(x,y \,|\, z)$ is used to denote $P(X=x, Y=y \,|\, Z=z)$.
This restriction can be used to falsify the IV model.  Pearl's proof of the inequality is model specific, and it is not clear how it might be applied to other graphs.  Below we present a new approach to the derivation of (\ref{eqn:ins}), and a more graphical interpretation of its meaning; as we shall see, this method can be adapted to many other DAG models, and provides some causal constraints.

\begin{prop}
Let $P$ be a probability distribution over three random variables $Z$, $X$ and $Y$, taking values in discrete sets $\mathcal{Z}$, $\mathcal{X}$ and $\mathcal{Y}$ respectively.
Then $P$ obeys the IV model only if for each $\xi \in \X$, the collection of conditional probabilities $(p(\xi, y \,|\, z), y \in \Y, z \in \Z)$ is \emph{compatible} with a distribution under which $Y \indep Z$.  

In other words, only if for each $\xi \in \X$ there exists a distribution $P^*$ such that $Y \indep Z \, [P^*]$, and $p^*(\xi, y \,|\, z) = p(\xi, y \,|\, z)$ for each $y \in \Y$ and $z \in \Z$.

This condition implies the instrumental inequality (\ref{eqn:ins}).
\end{prop}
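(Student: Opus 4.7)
The plan is to exhibit the required distribution $P^*$ directly from the factorization of the IV model, using a construction motivated by the graph in Figure~\ref{fig:iv}(b): the IV graph with the edge $X \to Y$ removed. If $P$ obeys the IV model, then by the factorization criterion there exist factors such that
\[
f(z, x, y) = f(z) \sum_u f(u)\, f(x \mid z, u)\, f(y \mid x, u),
\]
where the absence of $z$ from $f(y \mid x, u)$ encodes the exclusion restriction. Fix $\xi \in \X$. I would define $P^*$ by
\[
f^*(z, x, y) \;:=\; f(z) \sum_u f(u)\, f(x \mid z, u)\, f(y \mid \xi, u),
\]
i.e.\ replace the factor $f(y \mid x, u)$ by the single conditional $f(y \mid \xi, u)$ that no longer depends on $x$. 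This is exactly a distribution in the model of Figure~\ref{fig:iv}(b), in which $Y \indep Z$ by d-separation (the only path $Z \to X \leftarrow U \to Y$ is blocked at the unconditioned collider $X$).

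Next I would verify the matching condition on the slice $X = \xi$. Directly,
\[
f^*(\xi, y \mid z) = \sum_u f(u)\, f(\xi \mid z, u)\, f(y \mid \xi, u) = p(\xi, y \mid z),
\]
where the second equality is just the IV factorization of $p(\xi, y \mid z)$ evaluated at $x = \xi$ (the key algebraic point is that the replacement of $f(y \mid x, u)$ by $f(y \mid \xi, u)$ is inert precisely when $x = \xi$). This establishes the compatibility claim: $P^*$ has $Y \indep Z$ and agrees with $P$ on the conditional probabilities $p(\xi, y \mid z)$.

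Finally, I would deduce the instrumental inequality as a one-line corollary. For any $\xi$, using $Y \indep Z$ under $P^*$ and then marginalising over $x$,
\[
p(\xi, y \mid z) = p^*(\xi, y \mid z) \;\leq\; p^*(y \mid z) = p^*(y).
\]
The left side is bounded by a quantity not depending on $z$, so $\max_z p(\xi, y \mid z) \leq p^*(y)$; summing over $y$ gives $\sum_y \max_z p(\xi, y \mid z) \leq 1$, and maximising over $\xi$ yields~(\ref{eqn:ins}).

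The main obstacle is the first step: spotting that the ``mutilated'' factorization obtained by substituting $f(y \mid \xi, u)$ for $f(y \mid x, u)$ gives a genuine joint distribution that both lies in a model satisfying $Y \indep Z$ and reproduces the observed $p(\xi, y \mid z)$ on the slice $X = \xi$. Once this construction is written down, d-separation in Figure~\ref{fig:iv}(b) handles the independence and the bound on the marginal drops out immediately from conditional probabilities being at most the corresponding marginals.
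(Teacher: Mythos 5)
Your proposal is correct and follows essentially the same route as the paper: the same construction of $P^*$ by substituting $f(y \,|\, \xi, u)$ for $f(y \,|\, x, u)$ in the IV factorization, the same appeal to the mutilated graph of Figure \ref{fig:iv}(b) for $Y \indep Z \, [P^*]$, and the same verification on the slice $X = \xi$. Your final deduction of (\ref{eqn:ins}) via $\max_z p(\xi, y \,|\, z) \leq p^*(y)$ and summing over $y$ is a slightly more streamlined version of the paper's argument (which bounds $\max_{z'} p(\xi, y \,|\, z') - p(\xi, y \,|\, z)$ by the residual mass $\sum_{x \neq \xi} p^*(x, y \,|\, z)$), but it rests on the same facts and reaches the same conclusion.
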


\begin{proof}
Suppose that $P$ is in the IV model.  Then
\begin{align*}
p(x, y\,|\, z) = \int f(u)\, p(x \,|\, u, z) \, p(y \,|\, u,x) \, du;
\end{align*}
construct a distribution $P^*$ by
\begin{align*}
p^*(x, y\,|\, z) = \int f(u)\, p(x \,|\, u, z) \, p(y \,|\, u, \xi) \, du.
\end{align*}
Under $P^*$, the effect of $X$ on $Y$ has been broken, because $Y$ behaves as though $X=\xi$ regardless of its actual value.    $P^*$ obeys the factorization criterion with respect to the graph in Figure \ref{fig:iv}(b); thus $Y \indep Z \, [P^*]$, and by construction $p^*(\xi, y \,|\, z) = p(\xi, y \,|\, z)$ for each $y \in \Y$ and $z \in \Z$.

To see that this implies (\ref{eqn:ins}), first note that the independence is equivalent to
\begin{align*}
p^*(y \,|\, z) &= p^*(y \,|\, z')\\
p(\xi, \, y \,|\, z)\! +\! \sum_{x\neq \xi} p^*(x, \, y \,|\, z)\! &=\! p(\xi, \, y \,|\, z')\! +\! \sum_{x\neq \xi} p^*(x, \, y \,|\, z')
\end{align*}
for each $y \in \Y$, $z, z' \in \Z$.  Suppose we are given the probabilities $p(\xi, \, y \,|\, z)$ and asked to construct a distribution $P^*$ satisfying these equations.  Since all the quantities are positive, 
%\begin{align*}
%p(\xi, \, y \,|\, z') - p(\xi, \, y \,|\, z) \leq \sum_{x\neq \xi} p^*(x, \, y \,|\, z),
%\end{align*}
and this equality holds for each $z'$, we have
\begin{align*}
\max_{z'} p(\xi, \, y \,|\, z') - p(\xi, \, y \,|\, z) \leq \sum_{x\neq \xi} p^*(x, \, y \,|\, z).
\end{align*}
However the sum of the quantities on the RHS over $y$ cannot be greater than $1-p(\xi \,|\, z) = 1-\sum_y p(\xi, y \,|\, z)$, so 
\begin{align*}
\sum_y \left(\max_{z'} p(\xi, \, y \,|\, z') - p(\xi, \, y \,|\, z) \right) & \leq 1-\sum_y p(\xi, y \,|\, z)\\
\sum_y \max_{z'} p(\xi, \, y \,|\, z') &\leq 1.
\end{align*}
Applying this to each $\xi$ gives (\ref{eqn:ins}).
%It is easy to see that (\ref{eqn:ins}) implies the condition given in the Theorem, 
\end{proof}

\begin{rmk}
Whilst these inequalities are not new, the importance of the above result lies in the proof technique; we will see in the next section that it generalizes to many other DAG models, giving novel results.  

The instrumental inequality is exact when $X$, $Y$ and $Z$ are binary, but insufficient if $Z$ takes three states \cite{bonet:01}.  The sufficient bounds are difficult to derive without using computationally intensive linear programming techniques and Fourier-Motzkin elimination, which become infeasible for moderately sized state spaces.  
%These inequalities may be interpreted in terms of the above proof as saying that we must be able to find distributions $P^*$ for each $\xi \in \X$, as stated, but also such that in each case the factor $p^*(x \,|\, z, u)$ is the same.
\end{rmk}

\subsection{Causal bounds on the IV model}

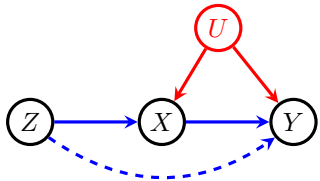
\begin{figure}
\begin{center}
\begin{tikzpicture}
[rv/.style={circle, draw, very thick, minimum size=6mm, inner sep=0.5mm}, node distance=17.5mm, >=stealth]
 \pgfsetarrows{latex-latex};
 \node[rv]  (1)              {$Z$};
 \node[rv, right of=1] (2) {$X$};
 \node[rv, above of=2, color=red, xshift=7.5mm, yshift=-5mm] (U) {$U$};
 \node[rv, right of=2] (3) {$Y$};
 \draw[->, very thick, color=blue] (1) -- (2);
 \draw[->, very thick, color=blue] (2) -- (3);
 \draw[->, very thick, dashed, color=blue] (1.320) .. controls +(1,-0.7) and +(-1,-0.7) .. (3.220);
 \draw[->, very thick, color=red] (U) -- (2);
 \draw[->, very thick, color=red] (U) -- (3);
\end{tikzpicture}
\end{center}
\caption{The model from Figure \ref{fig:iv}(a) with a possible effect of $Z$ on $Y$ added.}
\label{fig:iv2}
\end{figure}

%If we falsify the model in Figure \ref{fig:iv}(a) using the bounds (\ref{eqn:ins}) then we must conclude that $Z$ has a non-zero effect upon $Y$ (at least if we accept the other assumptions of the causal IV model).
We next try to invert the problem and ask \emph{how much} effect $Z$ can have on $Y$ given the observed distribution.  In some sense we are trying to quantify the strength of the dashed arrow in Figure \ref{fig:iv2}.  A suitable measure is the \emph{average controlled direct effect} (ACDE) of $Z$ on $Y$, controlling for $X=x$; this is defined for binary $Z$, $X$ and $Y$ as
%\begin{align*}
%\begin{split}
%\ACDE_{Z \rightarrow Y}(x) &\equiv p(y=1 \,|\, \Do(z=1, x))\\ & \qquad - p(y=1 \,|\, \Do(z=0, x)).
%\end{split}
%\end{align*}
\begin{align*}
\ACDE_{Z \rightarrow Y}(x) &\equiv p(y_1 \,|\, \Do(z_1, x)) - p(y_1 \,|\, \Do(z_0, x)).
\end{align*}
Here $y_1$ is a shorthand for $\{Y = 1\}$, whilst $x$ means $\{X=x\}$, etc.  
Generalizations to non-binary state spaces are also possible \cite{cai:08}.
Note that $\ACDE_{Z \rightarrow Y}(x) = 0$ for each $x$ if $Z \not\rightarrow Y$.
For the DAG in Figure \ref{fig:iv2},
\begin{align*}
p(y \,|\, \Do(z, x)) &= \int_u f(u) \, p(y \,|\, x, z, u) \, du,
%&\neq p(y \,|\, z, x),
\end{align*}
which is not identified.
However, constructing $P^*$ as above,
\begin{align*}
p(y \,|\, \Do(z, \xi)) &= \int_u f(u) \, p^*(y \,|\, z, u) \, du\\
&= p^*(y \,|\, z)\\
&= p(y, \xi \,|\, z) + \sum_{x \neq \xi} p^*(y,x \,|\, z)\\
%&= p(y, \xi \,|\, z) + \sum_{x \neq \xi} p^*(y,x \,|\, z)\\
&\leq p(y, \xi \,|\, z) + 1 - p(\xi \,|\, z).
\end{align*}
Also $p(y \,|\, \Do(z, \xi)) \geq p(y, \xi \,|\, z)$, so
%\begin{align*}
%\ACDE(x) &\leq p(y=1, x \,|\, z=1) + 1 - p(x \,|\, z=1) - p(y=1, x \,|\, z=0)\\
%&= 1 - p(y=0, x \,|\, z=1) - p(y=1, x \,|\, z=0),
%\end{align*}
\begin{align*}
\ACDE_{Z \rightarrow Y}(x) &\leq p(y_1, x \,|\, z_1) \!+\! 1 \!-\! p(x \,|\, z_1) \!-\! p(y_1, x \,|\, z_0)\\
&= 1 - p(y_0, x \,|\, z_1) - p(y_1, x \,|\, z_0),
\end{align*}
and similarly
\begin{align*}
\ACDE_{Z \rightarrow Y}(x) &\geq p(y_1, x \,|\, z_1) + p(y_0, x \,|\, z_0) - 1.
\end{align*}
Note that the ACDE bounds include zero if and only if the instrumental inequality (\ref{eqn:ins}) is satisfied.  These bounds were derived by \cite{cai:08} using linear programming, and shown to be tight.  In the next section we will extend this method to other graphs.

\section{Other Models} \label{sec:other}

Just as d-separation provides a graphical criterion for finding observable conditional independences, we now provide a graphical criterion for finding observable inequality constraints.  For a DAG $\G$ with vertex set $\V$ and edge set $\mathcal{E}$, define the \emph{induced subgraph} $\G_{\W}$ for $\W \subset \V$ as the DAG with vertex set $\W$ and edge set $\mathcal{E} \cap (\W \times \W)$.

Now we define our new separation criterion: let $\A$, $\B$, $\C$ and $\D$ be disjoint sets of observed vertices.  $\A$ and $\B$ are \emph{e-separated} (\textbf{e}xtended d-separation) given $\C$ after deletion of $\D$ in $\G$, if $\A$ and $\B$ are d-separated by $\C$ in $\G_{\V \setminus \D}$.  In other words, if we remove the vertices in $\D$ from the graph, then $\A$ and $\B$ are d-separated by $\C$.

For example, in the graph in Figure \ref{fig:iv}(a), $Z$ and $Y$ are e-separated after deletion of $X$.  The following lemma gives an alternative characterization of e-separation which will prove useful.  Its proof is elementary, and omitted for brevity.

\begin{lem} \label{lem:esep}
Let $\G$ be a DAG, and let $\G^*$ be the DAG formed from $\G$ by removing all edges which are oriented away from some vertex in $\D$ (i.e.\ of the form $D \rightarrow E$ for $D \in \D$).  Then $\A$ is e-separated from $\B$ by $\C$ after deletion of $\D$ in $\G$ if and only if $\A$ is d-separated from $\B$ by $\C$ in $\G^*$.
\end{lem}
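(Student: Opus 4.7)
The plan is to compare the graphs $\G_{\V \setminus \D}$ and $\G^*$ directly, showing they declare the same paths active or blocked between $\A$ and $\B$ given $\C$. The key structural observation is that in $\G^*$ every $D \in \D$ is a sink: all its outgoing edges have been deleted, so its only descendant in $\G^*$ is itself. A secondary observation we will use repeatedly is that edges lying inside $\V \setminus \D$ are identical in both graphs (removing outgoing edges of $\D$ does not touch any such edge), and that directed paths in $\G^*$ ending at a vertex of $\C \subseteq \V \setminus \D$ cannot pass through $\D$, since they could never leave $\D$ again. Consequently, for any $V \in \V \setminus \D$, the set of $\C$-ancestors of $V$ is the same in $\G^*$ and in $\G_{\V \setminus \D}$.

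For the ``if'' direction, assume $\A$ and $\B$ are d-separated by $\C$ in $\G^*$, and take any path $\pi$ between $\A$ and $\B$ in $\G_{\V \setminus \D}$. Every edge of $\pi$ lies between vertices in $\V \setminus \D$, hence $\pi$ is also a path in $\G^*$; by the ancestor observation, the collider/non-collider blocking statuses along $\pi$ coincide in the two graphs. So $\pi$ is blocked in $\G_{\V \setminus \D}$ because it is blocked in $\G^*$.

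For the ``only if'' direction, assume d-separation in $\G_{\V \setminus \D}$ and let $\pi$ be a path between $\A$ and $\B$ in $\G^*$. If $\pi$ stays inside $\V \setminus \D$ then it is a path in $\G_{\V \setminus \D}$ and is blocked there, hence blocked in $\G^*$ by the same ancestor correspondence. Otherwise $\pi$ visits some $D \in \D$; since $D$ has no outgoing edges in $\G^*$, both edges of $\pi$ incident to $D$ must point into $D$, so $D$ is a collider on $\pi$. But $D \notin \C$ (disjointness) and $D$ has no descendants other than itself in $\G^*$, so $D$ is not an ancestor of any vertex of $\C$; the collider at $D$ therefore blocks $\pi$.

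The only real subtlety is matching the notion of ``ancestor of $\C$'' across the two graphs; everything else is bookkeeping about colliders. Once the sink property of $\D$ in $\G^*$ is established, this ancestor correspondence is immediate, and both implications follow.
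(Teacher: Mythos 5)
Your proof is correct. The paper omits its own proof of Lemma \ref{lem:esep} as ``elementary,'' and your argument --- a direct path-by-path comparison of $\G_{\V\setminus\D}$ and $\G^*$, using that every $D \in \D$ is a sink in $\G^*$ (so any $D$ on a path is a blocked collider, and directed paths into $\C$ avoid $\D$, making the ancestor-of-$\C$ relation agree on $\V \setminus \D$) --- is precisely the intended elementary argument; the only nitpick is the phrase ``$\C$-ancestors of $V$,'' which should read ``whether $V$ is an ancestor of a vertex in $\C$,'' as your own justification makes clear.
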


Graphs formed by removing the edges emanating from vertices form a part of Pearl's do-calculus \cite{pearl:09}.  The node-splitting method in \cite{robins:06} is also related.

Suppose now that we are interested in the detecting the presence or absence of the edge $X \rightarrow Y$ in a general graph, and in estimating the strength of the (direct) causal effect of $X$ on $Y$.  We first show that if $X$ and $Y$ are not directly confounded with each other, which is to say that they do not share a latent parent, then falsifiable constraints (such as the instrumental inequality) for the absence of the edge $X \not\rightarrow Y$ always exist.

\begin{thm} \label{thm:ineq}
Let $\G$ be a DAG, and let $\A$, $\B$, $\C$ and $\D$ be disjoint sets of observable vertices such that no vertex in $\C$ is a descendant of any in $\D$.
If $\A$ is e-separated from $\B$ by $\C$ after deletion of $\D$, then for any fixed value $\D=\d$, the conditional probabilities $p(\a, \b, \d \,|\, \c)$ must be compatible with a distribution $P^*$ in which $\A \indep \B \,|\, \C \, [P^*]$.

If in addition no vertex in $\A$ is a descendant of any element of $\D$, then the probabilities $p(\b, \d \,|\, \a, \c)$ must be compatible with a distribution $P^*$ in which $\A \indep \B \,|\, \C \, [P^*]$.
\end{thm}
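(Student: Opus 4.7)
The plan is to adapt the construction from the proof of the preceding proposition, now with the arbitrary deletion set $\D$ playing the role of the single vertex $X$ there. Given a fixed value $\D = \d$, define a distribution $P^*$ on $\V$ by
\begin{align*}
f^*(\v) = \prod_{V \in \D} f(V \,|\, \pa_\G(V)) \;\cdot\; \prod_{V \notin \D} f(V \,|\, \pa_\G(V))\big|_{\D = \d};
\end{align*}
that is, the conditionals of vertices in $\D$ are kept intact, while in every other conditional we substitute $\d$ for any parent that lies in $\D$. Two properties need to be checked: (i) $\A \indep \B \,|\, \C \,[P^*]$, and (ii) $p^*(\a,\b,\d\,|\,\c) = p(\a,\b,\d\,|\,\c)$ at this particular $\d$.

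For (i), each factor with $V \notin \D$ no longer depends on $\D$ after the substitution, so it acts as a valid conditional density for $V$ given $\pa_\G(V) \setminus \D = \pa_{\G^*}(V)$, while the factors for $V \in \D$ are unchanged and already have the correct parents in $\G^*$. Hence $f^*$ factorises according to the auxiliary graph $\G^*$ of Lemma \ref{lem:esep}, and since $\A$ is d-separated from $\B$ by $\C$ in $\G^*$, the global Markov property yields the required independence under $P^*$.

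For (ii), I would first note that $f^*(\v) = f(\v)$ pointwise at $\D = \d$, since substituting $\d$ into arguments that already equal $\d$ is a no-op. Marginalising latent and extraneous observed variables then gives $p^*(\a,\b,\c,\d) = p(\a,\b,\c,\d)$. To pass to the conditional we additionally need $p^*(\c) = p(\c)$, and this is where the hypothesis on $\C$ enters. The set of non-descendants of $\D$ is closed under taking parents, and no vertex in it has a parent in $\D$, so the factors $f^*(V \,|\, \pa_\G(V))$ for such $V$ coincide with $f(V \,|\, \pa_\G(V))$. Integrating out the remaining (descendant) vertices in topological order — each integration contributes a factor of $1$ because conditional densities integrate to $1$ — shows that $p^*$ and $p$ have identical marginals on the non-descendants of $\D$; since $\C$ lies in this set by hypothesis, $p^*(\c) = p(\c)$, and (ii) follows by division.

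The second statement follows by the same argument with $\A \cup \C$ in place of $\C$: the extra hypothesis places $\A$ among the non-descendants of $\D$ as well, so the marginal-preservation step gives $p^*(\a,\c) = p(\a,\c)$, and therefore $p^*(\b,\d\,|\,\a,\c) = p(\b,\d\,|\,\a,\c)$. The only slightly delicate step is the marginal-preservation claim for non-descendants of $\D$; once isolated as a lemma it is a routine topological-induction exercise, and the rest of the argument is essentially bookkeeping about which factors of $f^*$ have been altered relative to $f$.
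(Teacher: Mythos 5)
Your construction is essentially the paper's own proof: fix $\D=\d$ in the conditioning sets of the factorization, note that the resulting $P^*$ factorizes according to the edge-deleted graph so that Lemma \ref{lem:esep} and the global Markov property give $\A \indep \B \,|\, \C \,[P^*]$, use pointwise agreement of the densities at $\D=\d$ for the joint, and use ancestral closure of the non-descendants of $\D$ to preserve the marginal of $\C$ (and of $\A \cup \C$ for the second claim). One small slip: because you leave the factors for $V \in \D$ untouched, any edges within $\D$ survive, so $f^*$ factorizes according to $\G$ with only the $\D$-to-non-$\D$ edges removed rather than the $\G^*$ of Lemma \ref{lem:esep}; this is harmless, but to be precise you should either substitute $\d$ into the $\D$-parents of those factors as well (as the paper does), or observe that the retained within-$\D$ edges cannot open any path from $\A$ to $\B$ given $\C$, since such a path must enter and leave $\D$ through arrowheads, creating a collider in $\D$ whose descendants stay inside $\D$ and which therefore cannot be an ancestor of $\C$.
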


%\begin{proof}
%See appendix.
%\end{proof}

\begin{cor} \label{cor:ineq}
Let $\G$ be a DAG containing observable vertices $X, Y$, which do not share a latent parent nor are joined by an edge; let $\G'$ be equal to $\G$, except that $X \rightarrow Y$ in $\G'$.  Then if the observed variables in the graphs are discrete, the model defined by the observed margin of $\G'$ is strictly larger than the one defined by $\G$.
\end{cor}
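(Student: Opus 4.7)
The plan is to use Theorem \ref{thm:ineq} to extract a falsifiable inequality constraint on $\M(\G)$ and then write down a distribution in $\M(\G')$ that violates it. Containment $\M(\G) \subseteq \M(\G')$ is immediate, because any density that factorizes according to $\G$ also factorizes according to $\G'$ (the extra parent $X$ in $\pa_{\G'}(Y)$ simply goes unused in $p(Y \mid \pa_\G(Y))$), so the content of the corollary is really to produce one distribution in $\M(\G') \setminus \M(\G)$.

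To obtain the constraint I apply Theorem \ref{thm:ineq} with $\A = \{X\}$, $\B = \{Y\}$, $\C = \emptyset$, and $\D$ equal to the set of all other observable vertices; the side condition is vacuous since $\C = \emptyset$. The nontrivial step is to verify the e-separation, and this is the only real obstacle. In the subgraph $\G_{\V \setminus \D}$ the surviving vertices are $\U \cup \{X, Y\}$, and because latents have no parents, every edge in this subgraph must have the form $L \to X$ or $L \to Y$ for some $L \in \U$. Consequently any $X$--$Y$ path in $\G_{\V \setminus \D}$ must collapse to $X \leftarrow L \to Y$ with $L$ a common latent parent, which the hypothesis rules out. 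Theorem \ref{thm:ineq} then yields: for every $P \in \M(\G)$ and every value $\d$ of $\D$, the table $(x, y) \mapsto p(x, y, \d)$ must be compatible with a joint distribution on $(X, Y)$ under which $X \indep Y$.

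To exhibit a violation, I define a density on $\G'$ by declaring every latent variable and every observable vertex other than $X, Y$ to take the value $0$ deterministically regardless of its parents, drawing $X$ uniformly on $\{0, 1\}$ independently of its parents, and setting $Y := X$ — permitted in $\G'$ precisely because $X \in \pa_{\G'}(Y)$. This density factorizes according to $\G'$, so its observed marginal $P'$ lies in $\M(\G')$. At $\d = \mathbf{0}$ one has $p'(x, y, \mathbf{0}) = \tfrac12 \mathbbm{1}\{x = y\}$; since these numbers already sum to $1$ over $(x, y)$, any compatible $P^*$ is forced to place all of its mass at $\d = \mathbf{0}$, so $p^*(x, y) = \tfrac12 \mathbbm{1}\{x = y\}$, and then $p^*(0, 0) = \tfrac12 \neq \tfrac14 = p^*(X = 0)\,p^*(Y = 0)$ rules out $X \indep Y$. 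Hence $P' \notin \M(\G)$, completing the argument.
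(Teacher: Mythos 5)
Your proof is correct and takes essentially the same route as the paper: apply Theorem \ref{thm:ineq} with $\A=\{X\}$, $\B=\{Y\}$, $\C=\emptyset$ and $\D=\V\setminus(\U\cup\{X,Y\})$, then exhibit a distribution Markov with respect to $\G'$ whose observed margin violates the resulting compatibility constraint. The only differences are refinements: you check the e-separation explicitly (the paper leaves it implicit), and your violating distribution is exactly degenerate ($\D$ constant, $Y=X$) rather than the paper's $P(\D=\d)=1-\epsilon$ with ``strong correlation'', which if anything makes the violation cleaner.
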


\begin{proof}
Under the conditions given, we can apply Theorem \ref{thm:ineq} to $\G$ with $\A = \{X\}$, $\B = \{Y\}$, $\C = \emptyset$ and $\D = \V \setminus (\U \cup \{X, Y\})$. 

To see that this implies a constraint, consider a distribution in which all vertices other than $X$ and $Y$ are completely independent, and $P(\D=\d) = 1-\epsilon$ for some arbitrarily small $\epsilon > 0$.  
Then $P(X, Y, \D \,|\, \C) \approx P(X, Y \,|\, \C) = P(X, Y)$, and if $X$ and $Y$ are strongly correlated, it becomes impossible to find a compatible distribution under which $X \indep Y \,|\, \C$.  However, since the only dependence is between $X$ and $Y$, such a distribution would certainly obey the global Markov property with respect to $\G'$, which contains the edge $X \rightarrow Y$.
\end{proof}

\begin{rmk}
In other words, the Corollary states there exists some non-trivial (i.e.\ falsifiable) condition on the joint distribution which must be satisfied under $\G$, but not necessarily under $\G'$.  %Note that sets of observable vertices $\C$ and $\D$ satisfying the weaker condition of the Theorem always exist, since we could just take $\C = \emptyset$, $\D = \V \setminus (\U \cup \{X,Y\})$.  
In many cases we can choose smaller sets $\D$ than the one used in the proof of Corollary \ref{cor:ineq};
the generated inequalities will tend to be more powerful if $\D$ is smaller, so certainly a minimal set should be used.

It is important to stress that this result is \emph{not} a causal one, and the constraints are merely a consequence of marginalizing distributions obeying certain conditional independence constraints.  In the next subsection, however, we will extend this method to estimate the strength of causal relationships.
\end{rmk}

In the IV graph in Figure \ref{fig:iv}(a), $Z$ is e-separated from $Y$ after deletion of $X$, giving an inequality constraint.  
%The instrumental inequality is slightly stronger than the inequality implied by the result above, because the compatibility requirement can be applied to the conditional distribution $p(y \,|\, z, x)$, rather than the joint $p(y, z \,|\, x)$.  This strengthening will hold in any graph where all the paths between $X$ and $Y$ have a pair of adjacent observed vertices; e.g.\ in the DAG in Figure \ref{fig:uc}(a), the path $X \leftarrow U_1 \rightarrow Z \leftarrow U_2 \rightarrow Y$ prevents the tighter inequality holding.  In the language of \cite{tian:02}, the stronger inequality holds if $X$ and $Y$ are in different c-components.
In general, the additional constraint implied by the Theorem may be an inequality or a conditional independence (if $\D = \emptyset$); an inequality constructed will in some cases be a weaker manifestation of a Verma constraint, or possibly some other as yet unknown form of equality constraint.  %Conditions under which independence constraints arise are determined by d-separation, while this paper makes some progress towards characterizing inequalities.  
Verma constraints are still poorly understood; see \cite{tian:02} for methods on deriving them.

\begin{rmk}
Theorem \ref{thm:ineq} can be extended to continuous state spaces without difficulty, but it is necessary for the set $\D$ to contain only discrete variables.  The IV model from Figure \ref{fig:iv}(a) with continuous $X$ is unconstrained, for example.
\end{rmk}

\subsection{Causal Bounds}

As with the IV model, we can find bounds on the average controlled direct effect due to the edge $X \rightarrow Y$ in arbitrary models, so long as $X$ and $Y$ are not directly confounded.  First we generalize the average controlled direct effect slightly to allow conditioning:
\begin{align*}
\ACDE_{X \rightarrow Y}(\d \,|\, \c) &\equiv p(y_1 \,|\, \Do(x_1, \d), \c) \\
& \qquad- p(y_1 \,|\, \Do(x_0, \d), \c).
\end{align*}
In general $\ACDE_{X \rightarrow Y}(\d \,|\, \c) \neq \ACDE_{X \rightarrow Y}(\d, \c)$, but for appropriate graphs if $\ACDE_{X \rightarrow Y}(\d \,|\, \c) \neq 0$, then $X \rightarrow Y$.

\begin{thm} \label{thm:causal}
Let $\G$ be a DAG containing the edge $X \rightarrow Y$ and observable sets of vertices $\C$, $\D$ such that no vertex in $\C$ is a  descendant of one in $\D$.  Suppose further that if the edge $X \rightarrow Y$ is removed, $X$ is e-separated from $Y$ by $\C$ after deletion of $\D$.

Let
\begin{align*}
L(x,y,\d \,|\, \c) &= \max\left\{0, \, \frac{p(x, y, \d \,|\, \c)}{p(x, \d \,|\, \c) + 1 - p(\d \,|\, \c)} \right\}\\
U(x,y,\d \,|\, c) &= \min\left\{\frac{p(x, y, \d \,|\, \c) + 1-p(\d \,|\, \c)}{p(x, \d \,|\, \c) + 1 - p(\d \,|\, \c)}, \, 1 \right\}.
\end{align*}
Then
\begin{align*}
L(x,y,\d \,|\, \c) \leq p(y \,|\, \Do(x, \d), \c) \leq U(x,y,\d \,|\, \c)
\end{align*}
and consequently for binary $X$ and $Y$,
\begin{align*}
& L(x_1,y_1,\d \,|\, \c) - U(x_0,y_1,\d \,|\, \c) \leq \ACDE_{X \rightarrow Y}(\d \,|\, \c) \\
& \qquad\qquad \leq U(x_1,y_1,\d \,|\, \c) - L(x_0,y_1,\d \,|\, \c).
\end{align*}
If in addition $X$ is not a descendant of any vertex in $\D$, these inequalities can be strengthened using 
\begin{align*}
L(x,y,\d \,|\, \c) &= p(y, \d \,|\, x, \c)\\
U(x,y,\d \,|\, \c) &= p(y, \d \,|\, x, \c) + 1 - p(\d \,|\, x, \c).
\end{align*}
\end{thm}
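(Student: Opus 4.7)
The plan follows the template of the IV bounds in Section~\ref{sec:iv}: construct an auxiliary distribution $P^*$ which (a) agrees with $P$ on every joint probability specialised to $\D = \d$ and on every marginal not involving $Y$, and (b) satisfies the key observational identity
\[
p^*(y \,|\, x, \c) \;=\; p(y \,|\, \Do(x, \d), \c).
\]
One concrete choice is to leave every DAG factor of $P$ unchanged except for $Y$'s conditional, replacing $p(Y \,|\, \pa_{\G}(Y))$ by $p(Y \,|\, \pa_{\G}(Y) \setminus \D,\, \D = \d)$; when $\D \cap \pa_{\G}(Y) = \emptyset$ one analogously hardcodes $\D = \d$ in the factors of the variables mediating between $\D$ and $Y$. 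Property~(a) is then immediate from the fact that once $\D = \d$ every factor of $P^*$ coincides with its counterpart in $P$, and $Y$'s modified factor still sums to one so $P^*$ and $P$ agree on any marginal not involving $Y$. The e-separation hypothesis, re-expressed through Lemma~\ref{lem:esep}, together with the assumption that no vertex in $\C$ is a descendant of $\D$, is precisely what is needed to verify identity~(b): informally, it ensures that the only confounding between $X$ and $Y$ that matters for the causal quantity of interest is routed through the $\D$-values, which have been hardcoded at $\d$ in $P^*$.

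With (a) and (b) in hand, the bounds arise from the decomposition
\[
p^*(y \,|\, x, \c) \;=\; p^*(y,\, \D = \d \,|\, x, \c) \;+\; \sum_{\d' \neq \d} p^*(y,\, \d' \,|\, x, \c).
\]
By property~(a) the first term equals the observable $p(y, \d \,|\, x, \c)$; the second is non-negative and bounded above by $p^*(\D \neq \d \,|\, x, \c) = 1 - p(\d \,|\, x, \c)$. When $X$ is not a descendant of any vertex in $\D$, property~(a) also yields $p^*(\d \,|\, x, \c) = p(\d \,|\, x, \c)$, and one reads off the strengthened bounds $L = p(y, \d \,|\, x, \c)$ and $U = L + 1 - p(\d \,|\, x, \c)$ directly.

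The main obstacle, and the reason for the more elaborate general formulas, is the case where $X$ may itself be a descendant of some vertex in $\D$, so that the marginal of $X$ under $P^*$ differs from that under $P$. Then one works with the ratio $p^*(y \,|\, x, \c) = p^*(y, x \,|\, \c)/p^*(x \,|\, \c)$ and bounds numerator and denominator separately using property~(a): writing $p^*(y, x \,|\, \c) = p(y, x, \d \,|\, \c) + \alpha$ and $p^*(x \,|\, \c) = p(x, \d \,|\, \c) + \beta$ with $0 \leq \alpha \leq \beta \leq 1 - p(\d \,|\, \c)$ — an interventional analogue of Theorem~\ref{thm:ineq} applied to $\G \setminus \{X \to Y\}$ — extremising over $(\alpha, \beta)$ in this parameter box yields the general lower bound at $\alpha = 0$, $\beta = 1 - p(\d \,|\, \c)$ and the upper bound at $\alpha = \beta = 1 - p(\d \,|\, \c)$, with denominator $p(x, \d \,|\, \c) + 1 - p(\d \,|\, \c)$ as stated. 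Finally, the ACDE bounds follow by differencing the upper bound at $x_1$ with the lower bound at $x_0$ (and vice versa), with truncation to $[0,1]$ producing the $\max\{0, \cdot\}$ and $\min\{\cdot, 1\}$ in the definitions of $L$ and $U$.
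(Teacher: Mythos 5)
The pivotal step of your argument --- identity (b), $p^*(y \,|\, x, \c) = p(y \,|\, \Do(x,\d), \c)$ --- is asserted rather than proved, and for the $P^*$ you actually construct it is false in general. You modify only $Y$'s factor (or, when $\D \cap \pa_{\G}(Y) = \emptyset$, the factors of mediators between $\D$ and $Y$). But to make conditioning on $X$ coincide with intervening you must cut \emph{every} edge out of $\D$, i.e.\ hardcode $\D = \d$ in every factor whose parent set meets $\D$, exactly as in the paper's proof of Theorem \ref{thm:ineq}: only then does $P^*$ factorize according to the graph $\G^*$ with all edges out of $\D$ removed, and only then does Lemma \ref{lem:esep} turn the e-separation hypothesis into the statement that in $\G^*$ the sole connection between $X$ and $Y$ given $\C$ is the edge $X \rightarrow Y$ itself, which is what licenses reading $p^*(y \,|\, x, \c)$ as the interventional quantity. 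Concretely, take the UC graph of Figure \ref{fig:uc} with the extra edge $Z \rightarrow Y$, and apply the theorem with treatment $Z$, outcome $Y$, $\D = \{X\}$, $\C = \emptyset$. Your construction replaces only $f(y \,|\, x, z, u_2)$ by $f(y \,|\, \xi, z, u_2)$ and leaves $f(z \,|\, x, u_1)$ untouched, so $p^*(y \,|\, z) = \sum_{u_2} p(u_2 \,|\, z)\, f(y \,|\, \xi, z, u_2)$, whereas $p(y \,|\, \Do(z,\xi)) = \sum_{u_2} f(u_2)\, f(y \,|\, \xi, z, u_2)$; these differ because $U_2$ and $Z$ are dependent through $X$. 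The paper's construction also fixes $X = \xi$ in $Z$'s factor, and that is precisely what breaks this back-door dependence.

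The slip also surfaces as an internal inconsistency. Under your stated construction every marginal not involving $Y$ is literally unchanged, so your property (a) gives $p^*(\d \,|\, x, \c) = p(\d \,|\, x, \c)$ and $p^*(x \,|\, \c) = p(x \,|\, \c)$ unconditionally, and your second paragraph would deliver the strengthened bounds in \emph{all} cases, leaving the theorem's distinction between $X$ being or not being a descendant of $\D$ with no role. Your final paragraph then invokes ``the case where the marginal of $X$ under $P^*$ differs from that under $P$'', which can only occur for the full construction in which the factors of all children of $\D$ are modified; you have tacitly switched constructions midway. Once the correct $P^*$ is used, your extremisation over $0 \le \alpha \le \beta \le 1 - p(\d \,|\, \c)$ is essentially the paper's argument (the box constraints follow simply from $P^*$ agreeing with $P$ on the event $\D = \d$ together with $p^*(\c) = p(\c)$, which is guaranteed by the assumption that $\C$ contains no descendant of $\D$; no separate ``interventional analogue of Theorem \ref{thm:ineq}'' is needed), and the strengthened bounds follow from $p^*(x, \c) = p(x, \c)$ when $X$ is not a descendant of $\D$. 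So the arithmetic at the end is fine; the genuine gap is the unproved identification step and the wrong auxiliary distribution on which it would have to rest.
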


\begin{proof}
See appendix.
\end{proof}

\begin{rmk}
This result shows that we can always bound the effect corresponding to a directed edge, at least for some observed distributions, provided the two variables involved are not directly confounded with one another.
The bounds for the ACDE include zero if the compatibility requirement from Theorem \ref{thm:ineq} is satisfied.  If they exclude zero, then the edge $X \rightarrow Y$ must be present in the graph (given the other assumptions).
\end{rmk}

\section{Examples} \label{sec:exm}

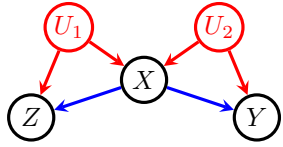
\begin{figure}
\begin{center}
\begin{tikzpicture}
[rv/.style={circle, draw, very thick, minimum size=6mm, inner sep=0.5mm}, node distance=15mm, >=stealth]
 \pgfsetarrows{latex-latex};
\begin{scope}
 \node[rv]  (1)              {$X$};
 \node[rv, left of=1, yshift=-5mm] (2) {$Z$};
 \node[rv, right of=1, yshift=-5mm] (3) {$Y$};
  \node[rv, above of=2, color=red, xshift=5mm, yshift=-3mm] (U1) {$U_1$};
  \node[rv, above of=3, color=red, xshift=-5mm, yshift=-3mm] (U2) {$U_2$};
% \node[rv, right of=2] (3) {$Y$};
 \draw[->, very thick, color=blue] (1) -- (2);
 \draw[->, very thick, color=blue] (1) -- (3);
% \draw[->, very thick, color=blue] (2) -- (3);
 \draw[->, very thick, color=red] (U1) -- (1);
 \draw[->, very thick, color=red] (U1) -- (2);
 \draw[->, very thick, color=red] (U2) -- (1);
 \draw[->, very thick, color=red] (U2) -- (3);
%\node[below of=1, yshift=3mm] {(a)};
\end{scope}
% \begin{scope}[xshift=45mm]
%  \node[rv]  (1)              {$Z$};
%  \node[rv, left of=1, yshift=-5mm] (2) {$X$};
%  \node[rv, right of=1, yshift=-5mm] (3) {$Y$};
%   \node[rv, above of=2, color=red, xshift=5mm, yshift=-3mm] (U1) {$U_1$};
%   \node[rv, above of=3, color=red, xshift=-5mm, yshift=-3mm] (U2) {$U_2$};
% % \node[rv, right of=2] (3) {$Y$};
%  \draw[->, very thick, color=blue] (1) -- (2);
%  \draw[->, very thick, color=blue] (1) -- (3);
%  \draw[->, very thick, color=blue, dashed] (2) -- (3);
%  \draw[->, very thick, color=red] (U1) -- (1);
%  \draw[->, very thick, color=red] (U1) -- (2);
%  \draw[->, very thick, color=red] (U2) -- (1);
%  \draw[->, very thick, color=red] (U2) -- (3);
%  \node[below of=1, yshift=3mm] {(b)};
% \end{scope}
\end{tikzpicture}
\end{center}
\caption{The unrelated confounding (UC) model.}%; (b) the UC model with a possible additional effect.}
\label{fig:uc}
\end{figure}

\begin{figure}
\begin{center}
\begin{tikzpicture}
[rv/.style={circle, draw, very thick, minimum size=6mm, inner sep=0.5mm}, node distance=15mm, >=stealth]
 \pgfsetarrows{latex-latex};
\begin{scope}
 \node[rv] (1) {$X$};
 \node[rv, right of=1] (2) {$Y$};
 \node[rv, above of=1] (3) {$Z$};
 \node[rv, above of=2] (4) {$W$};
 \draw[->, very thick, color=blue] (3) -- (2);
 \draw[->, very thick, color=blue] (4) -- (1);
 \draw[<->, very thick, color=red] (4) -- (2);
 \draw[<->, very thick, color=red] (1) -- (3);
 \draw[<->, very thick, color=red] (3) -- (4);
% \draw[->, very thick, color=blue] (1) -- (2);
\node[below of=1, xshift=7.5mm, yshift=3mm] {(a)};
\end{scope}
\begin{scope}[xshift=45mm]
 \node[rv] (1) {$X$};
 \node[rv, right of=1] (2) {$Y$};
 \node[rv, above of=1] (3) {$Z$};
% \node[rv, above of=2] (4) {$W$};
 \draw[->, very thick, color=blue] (3) -- (2);
% \draw[->, very thick, color=blue] (4) -- (1);
% \draw[<->, very thick, color=red] (4) -- (2);
 \draw[<->, very thick, color=red] (1) -- (3);
% \draw[<->, very thick, color=red] (3) -- (4);
% \draw[->, very thick, color=blue] (1) -- (2);
 \node[below of=1, xshift=7.5mm, yshift=3mm] {(b)};
\end{scope}
\end{tikzpicture}
\end{center}
\caption{(a) A DAG with three independent unobserved variables; we have avoided explicitly drawing a vertex for each of the three unobserved variables, and instead use a bidirected ($\leftrightarrow$) edge to indicate its two (observed) children. (b) The same graph after deletion of $W$.}
\label{fig:gadget}
\end{figure}
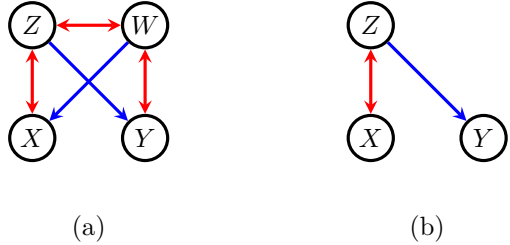

The graph in Figure \ref{fig:uc}, which we refer to as the \emph{unrelated confounding} (UC) model, has no edge between $Z$ and $Y$, and nor are these two variables directly confounded.
Theorem \ref{thm:ineq} and Corollary \ref{cor:ineq} therefore tell us that in the discrete case, the joint distribution of $(X,Y,Z)$ is restricted, and in particular that for each $\xi$, the joint probabilities $p(\xi, y, z)$ must be compatible with a distribution in which $Z \indep Y$.  Let $p_{ijk} \equiv p(x_i, y_j, z_k)$; in the binary case, given $p_{000}, p_{010}, p_{001}, p_{011}$, we need to find non-negative $p^*_{100}, p^*_{110}, p^*_{101}, p^*_{111}$ such that
\begin{align*}
(p_{000} + p^*_{100})(p_{011} + p^*_{111}) = (p_{010} + p^*_{110})(p_{001} + p^*_{101})
\end{align*}
and $\sum_{jk} p_{0jk} + \sum_{jk} p^*_{1jk} = 1$.  This will not be possible if, for example, $p_{000}$ and $p_{011}$ are both large; that is, we cannot have both $P(X=\xi)$ be large \emph{and} $Z$ and $Y$ strongly correlated conditional on $X=\xi$.
%One consequence of this condition is that
%\begin{align*}
%(1-p_{000})^2 + (1-p_{011})^2 \geq 1.
%\end{align*}
Unlike in the IV model we cannot apply the stronger condition of Theorem \ref{thm:ineq}, because $Z$ is a descendant of $X$.  We remark that (observationally) the UC model strictly contains the IV model in Figure \ref{fig:iv}(a).
Note that a linear programming approach to finding constraints on this graph is not possible, so the constructive nature of the proof of Theorem \ref{thm:ineq} is crucial in determining how we can test this model.

The graph in Figure \ref{fig:gadget}(a) is constrained in the discrete case because there is no edge between $X$ and $Y$.  Specifically $X$ is e-separated from $Y$ given $W$ after deletion of $Z$, and also given $Z$ after deletion of $W$ (the latter being illustrated in Figure \ref{fig:gadget}(b)).  Note that $X$ is a descendant of $W$, but not of $Z$, so the bounds given by Theorem \ref{thm:causal} are not symmetric in the two cases.  For example:
\begin{align*}
p(y \,|\, \Do(x,w), z) \leq p(y, w \,|\, x, z) + 1 - p(w \,|\, x, z)\\
p(y \,|\, \Do(x,z), w) \leq \frac{p(x, y, z \,|\, w) + 1 - p(z \,|\, w)}{p(x, z \,|\, w) + 1 - p(z \,|\, w)}.
\end{align*}
The first bound is likely to be stronger, though this will not hold in all cases.

\section{Discussion} \label{sec:discuss}

We have presented a graphical approach to finding inequality constraints in distributions corresponding to marginalized DAGs, based on the e-separation criterion.  It can be shown that the bounds derived from the algorithm of \cite{kang:06} also imply the causal constraints given in Theorem \ref{thm:causal}, however that approach involves listing exponentially many inequalities and then using Fourier-Motzkin elimination to derive bounds.  For even modestly sized graphs this becomes infeasible because Fourier-Motzkin is doubly-exponential in the number of variables in the elimination.

The advantage of the results given above is that they are `off the shelf', in the sense that we need only check the conditions of the Theorems and then apply the results.
Exhaustively searching possible sets $\C$ and $\D$ would be computationally intensive, but in many cases it is likely that good heuristics could be obtained for their selection.
This could be highly advantageous in systems with large numbers of variables, especially during computationally intensive model search procedures.
A further benefit of the e-separation criterion is that it is much easier and more intuitive for a human user to apply than using the algorithm of \cite{kang:06}.

The bounds derived from Theorem \ref{thm:ineq} are known not to be tight in some cases, including the IV model when the instrument takes three or more states.  However finding constraints from marginalized models is computationally intensive, even if the inequalities are linear, so a fast method for finding a subset of conditions may be very useful in practice.

\appendix

\section{Proofs}

\begin{proof}[Proof of Theorem \ref{thm:ineq}]
By the global Markov property for DAGs, the joint distribution $P$ over the observed variables takes the form (\ref{eqn:factor}).
Now, for each factor $f(V \,|\, \pa_{\G}(V))$, construct a new conditional density $f^*(V \,|\, \pa^*(V))$ where $\pa^*(V) = \pa_{\G}(V) \setminus \D$, by fixing any element of $\D \cap \pa_{\G}(V)$ to the value specified by $\D=\d$.  Note we only fix elements in the conditioning set, so $f^*$ is still a valid conditional density.

Then the joint distribution $P^*$ given by
\begin{align*}
\int_{U \in \U}  \prod_{V \in \V} f^*(V \,|\, \pa^*(V)) \, dU
\end{align*}
factorizes according to the DAG $\G^*$ formed by removing any edges in $\G$ which originate in $\D$ (i.e.\ the non-arrowhead end is incident to a vertex in $\D$).  By Lemma \ref{lem:esep}, $\A$ and $\B$ are d-separated by $\C$ in $\G^*$, and therefore the global Markov property for DAGs says that $\A \indep \B \,|\, \C \, [P^*]$.  Further, $P(\a, \b, \c, \d) = P^*(\a, \b, \c, \d)$ for the fixed $\D = \d$ and any $\a,\b,\c$, and $P^*(\c) = P(\c)$ because the distribution of vertices ordered before $\D$ will be unchanged.  This gives the compatibility condition.  If $\A$ is also ordered before $\D$ then $P^*(\a, \c) = P(\a ,\c)$, giving the stronger condition.
\end{proof}

\begin{proof}[Proof of Theorem \ref{thm:causal}]
For simplicity we will assume $\C = \emptyset$, but the extension to the general case is easy.
Let $P^*$ be the distribution formed by fixing $\D=\d$ in conditioning sets in the factorization of $P$, as in the proof of Theorem \ref{thm:ineq}.  Then
\begin{align*}
p(y \,|\, \Do(x, \d)) &= p^*(y \,|\, x) = \frac{p^*(y, x)}{p^*(x)}\\
&= \frac{p(y, x, \d) + \sum_{\d'\neq \d} p^*(y, x, \d')}{p(x, \d) + \sum_{\d'\neq \d} p^*(x, \d')}
\end{align*}
Clearly $\sum_{\d'\neq \d} p^*(y, x, \d') \leq \sum_{\d'\neq \d} p^*(x, \d') \leq 1 - p(\d)$; the expression is maximized by both these sums taking their largest possible values, and minimized when the first is zero and the second is $1 - p(\d)$.
%\begin{align*}
%\frac{p(y, x, \d)}{p(x, \d) + q(\d)} \leq p(y \,|\, x, \Do(\d)) &\leq \frac{p(y, x, \d) + q(\d)}{p(x, \d) + q(\d)}
%\end{align*}
%where $q(\d) \equiv 1-p(\d)$.  
This gives the main result.  %Using the definition of the average causal direct effect gives the main result.
If $X$ is not a descendant of $\D$ we have $p^*(x) = p(x)$, and arrive at the tighter bounds by a similar analysis.
%\begin{align*}
%\frac{p(y, x=1, d)}{p(x=1, d) + 1 - p(d)} &- \frac{p(y, x=0, d) + 1 - p(d)}{p(x=0, d) + 1 - p(d)} \leq \\
%\ACDE_{X \rightarrow Y}(d) &\leq \frac{p(y, x=1, d) + 1 - p(d)}{p(x=1, d) + 1 - p(d)} - \frac{p(y, x=0, d)}{p(x=0, d) + 1 - p(d)}
%\end{align*}
%\begin{align*}
%\frac{p(y_1, x_1, d)}{p(x_1, d) + 1 - p(d)} &- \frac{p(y_1, x_0, d) + 1 - p(d)}{p(x_0, d) + 1 - p(d)} \leq \\
%\ACDE_{X \rightarrow Y}(d) &\leq \frac{p(y, x_1, d) + 1 - p(d)}{p(x_1, d) + 1 - p(d)} - \frac{p(y, x_0, d)}{p(x_0, d) + 1 - p(d)}.
%\end{align*}
\end{proof}

%---------------%
\bibliographystyle{plainnat}
\bibliography{../mybib}

\begin{thebibliography}{11}
\providecommand{\natexlab}[1]{#1}
\providecommand{\url}[1]{\texttt{#1}}
\expandafter\ifx\csname urlstyle\endcsname\relax
  \providecommand{\doi}[1]{doi: #1}\else
  \providecommand{\doi}{doi: \begingroup \urlstyle{rm}\Url}\fi

\bibitem[Balke and Pearl(1997)]{balke:97}
A.~Balke and J.~Pearl.
\newblock Bounds on treatment effects from studies with imperfect compliance.
\newblock \emph{JASA}, 92\penalty0 (439):\penalty0 1171--1176, 1997.

\bibitem[Bonet(2001)]{bonet:01}
B.~Bonet.
\newblock Instrumentality tests revisited.
\newblock In \emph{UAI-01}, pages 48--55, 2001.

\bibitem[Cai et~al.(2008)Cai, Kuroki, Pearl, and Tian]{cai:08}
Z.~Cai, M.~Kuroki, J.~Pearl, and J.~Tian.
\newblock Bounds on direct effects in the presence of confounded intermediate
  variables.
\newblock \emph{Biometrics}, 64\penalty0 (3):\penalty0 695--701, 2008.

\bibitem[Kang and Tian(2006)]{kang:06}
C.~Kang and J.~Tian.
\newblock Inequality constraints in causal models with hidden variables.
\newblock In \emph{UAI-06}, pages 411--420, 2006.

\bibitem[Pearl(2009)]{pearl:09}
J.~Pearl.
\newblock \emph{Causality: Models, Reasoning, and Inference}.
\newblock Cambridge University Press, second edition, 2009.

\bibitem[Pearl(1995)]{pearl:95}
J.~Pearl.
\newblock On the testability of causal models with latent and instrumental
  variables.
\newblock In \emph{UAI-95}, volume~11, pages 435--443, 1995.

\bibitem[Ramsahai(2012)]{ramsahai:12}
R.R. Ramsahai.
\newblock Causal bounds and observable constraints for non-deterministic
  models.
\newblock \emph{JMLR}, 13:\penalty0 829--848, 2012.

\bibitem[Robins et~al.(2006)Robins, J., and Richardson]{robins:06}
J.~M. Robins, VanderWeele~T. J., and T.~S. Richardson.
\newblock Comment on `{C}ausal effects in the presence of non compliance: a
  latent variable interpretation' by {A}ntonio {F}orcina.
\newblock \emph{Metron}, 64\penalty0 (3):\penalty0 288--298, 2006.

\bibitem[Tian and Pearl(2002)]{tian:02}
J.~Tian and J.~Pearl.
\newblock On the testable implications of causal models with hidden variables.
\newblock In \emph{UAI-02}, pages 519--527, 2002.

\bibitem[Verma and Pearl(1988)]{verma:88}
T.~S. Verma and J.~Pearl.
\newblock Causal networks: semantics and expressiveness.
\newblock In \emph{UAI-88}, pages 352--359, 1988.

\bibitem[Verma and Pearl(1991)]{verma:91}
T.~S. Verma and J.~Pearl.
\newblock Equivalence and synthesis of causal models.
\newblock In \emph{UAI-91}, pages 255--268, 1991.

\end{thebibliography}

\end{document}